\numberwithin{equation}{section}
\newtheorem{theorem}{Theorem}[section]
\newtheorem{lemma}[theorem]{Lemma}
\newtheorem{proposition}[theorem]{Proposition}
\newtheorem{corollary}[theorem]{Corollary}
\theoremstyle{definition}
\newtheorem{definition}[theorem]{Definition} 
\newtheorem{remark}[theorem]{Remark}
\begin{document}


\title[Codes and perfect matchings]{Codes parameterized by the edges of a bipartite graph with a perfect matching} 

\author[Manuel G. Sarabia]{Manuel Gonz\'alez Sarabia}
\address{
Instituto Polit\'ecnico Nacional, 
UPIITA, Av. IPN No. 2580,
Col. La Laguna Ticom\'an,
Gustavo A. Madero C.P. 07340,
 M\'exico, D.F. 
Departamento de Ciencias B\'asicas. 
}\email{mgonzalezsa@ipn.mx}

\author[Rafael H. Villarreal]{Rafael H. Villarreal}
\address{Departamento de Matem\'aticas,
Centro de Investigaci\'on y de Estudios Avanzados del IPN,
Apartado Postal 14--740, 07000, 
Ciudad de  M\'exico.}
\email{vila@math.cinvestav.mx}

\thanks{The first author is partially supported by COFAA--IPN and SNI--SEP. The second author is partially supported by SNI--SEP}

\keywords{Parameterized code, Bipartite graph, Perfect matching.}
 
\subjclass[2010]{Primary 14G50, 13P25; Secondary 14G15, 11\-T71, 94B27, 94B05.}
\begin{abstract}
In this paper we study the main characteristics of some evaluation codes pa\-ra\-me\-te\-rized by the edges of a bipartite graph with a perfect matching.
\end{abstract}

\maketitle
\section{Introduction}
This work aims to study certain classes of linear codes, known as parameterized codes (see Definition \ref{param1}). As our main goal is to relate these codes with bipartite graphs with a perfect matching (see Definitions \ref{graph1} and \ref{graph2}), the codes are parameterized by the edges of a graph $\mathcal{G}$. The procedure is as follows: given a graph $\mathcal{G}$, we define its toric set $\mathbb{X}$ parameterized by its edges (see Definition \ref{X}), and then we associate an evaluation code, $C_{\mathbb{X}}(d)$, to this set $\mathbb{X}$. Our primary purpose is the description of the main characteristics of these codes. This article is an interesting generalization of \cite{sarabia2}, where the authors study the case of an even cycle $\mathcal{G}=C_n$. Furthermore, this work generalizes the case where $\mathcal{G}=\mathcal{K}_{m,m}$, a specific complete bipartite graph. In both instances, $\mathcal{G}$ is a bipartite graph with a perfect matching.

As far as we know, the first approach to this topic is given in \cite{sarabia1}, where the authors study the codes $C_{\mathbb{X}}(d)$ when $\mathbb{X}$ is the toric set parameterized by the edges of a complete bipartite graph $\mathcal{K}_{m,n}$. The results they obtain come from the fact that this code is the tensor product (as linear spaces) of codes associated with the projective torus (see Definition \ref{torus}). The case of the codes $C_{\mathbb{X}}(d)$ when $\mathbb{X}$ is the projective torus $\mathbb{T}_{s-1}$ is studied in \cite{GRH}, although the dimension and the regularity index (see Definition \ref{regularity}) are found in \cite{duursma} because $\mathbb{X}$ is a complete intersection (see Definition \ref{ci}). However, the formula for the minimum distance is given until 2011, in \cite{ci-codes}. Actually, in 2018 and 2020, in \cite{GHWACC} and \cite{RGHWACC}, the authors found the generalized Hamming weights and the relative generalized Hamming weights, respectively, of the affine cartesian codes, which are introduced in \cite{hiram}. These weights are a generalization of the minimum distance, and since the codes arising from the projective torus are equal to some particular affine cartesian codes, their value is known when $\mathbb{X}=\mathbb{T}_{s-1}$. Furthermore, the study of the generalized Hamming weights in the case of the cycle $C_4$ and some complete bipartite graphs of the form $\mathcal{K}_{2,m}$ is given in \cite{GHW2014}.

The only parameter known for any simple graph, connected or not, is the length of the code. In 2015, \cite[Theorem 3.2]{vaz}, the authors found an explicit formula for the cardinality of the set $\mathbb{X}$, which is the length of $C_{\mathbb{X}}(d)$. Also, in the same article, they found the regularity index when $\mathcal{G}$ is an even cycle, \cite[Theorem 6.2]{vaz}, and an upper bound for the case of bipartite graphs with subgraphs isomorphic to even cycles that have disjoint edge sets, \cite[Theorem 6.3]{vaz}. This upper bound is attained if the graph is connected, \cite[Corollary 6.5]{vaz}. Moreover, the case when $\mathcal{G}$ is an odd cycle is completely solved because its toric set is the projective torus.

If $\mathcal{G}=\mathcal{K}_n$ is a complete graph, its regularity index is shown in \cite[Remark 3]{sarabia3}. Moreover, some bounds for the minimum distance of these codes are given in \cite[Corollaries 8 and 9]{sarabia3}. Also, the regularity index when $\mathcal{G}$ is a complete multipartite graph is computed in \cite[Theorem 4.3]{neves}. Finally, in \cite{sarabia6}, there are some general bounds for the main parameters of the code $C_{\mathbb{X}}(d)$ when $\mathbb{X}$ is the toric set parameterized by the edges of any simple graph $\mathcal{G}$.

The contents of this paper are as follows. In Section \ref{preliminaries}, we introduce the main concepts about graphs and linear codes that will be useful to the development of the article. We define the code $C_{\mathbb{X}}(d)$ when $\mathbb{X}$ is the toric set parameterized by the edges of a bipartite graph with a perfect matching, which is the fundamental structure of this work. In Section \ref{bounds}, we give, in Theorem \ref{theorem1}, some bounds for the dimension and the minimum distance of $C_{\mathbb{X}}(d)$, and also for the regularity index of the vanishing ideal $I_{\mathbb{X}}$. In Section \ref{vanishing}, we define the set $\mathbb{Y}$, which plays a significant role studying the dimension of $C_{\mathbb{X}}(d)$. We prove, in Theorem \ref{theorem2}, that $\mathbb{Y}$ is a complete intersection, and find a set of generators for its vanishing ideal $I_{\mathbb{Y}}$. Moreover, we give a formula relating $I_{\mathbb{X}}$ and $I_{\mathbb{Y}}$ in Proposition \ref{prop2}.
In Section \ref{DimReg}, we find, in Theorem \ref{TeoF}, a formula for the dimension of the code $C_{\mathbb{X}}(d)$ in terms of the dimension of the codes asso\-cia\-ted with the projective torus, and the Hilbert function of $I_{\mathbb{Y}}/I_{\mathbb{X}}$. It allows us to give a tight lower bound for the regularity index of the vanishing ideal $I_{\mathbb{X}}$, which is attained in the cases of even cycles and complete bipartite graphs of the form $\mathcal{K}_{m,m}$. This bound is also attained in the case of graphs such that all of their connected components are even cycles (Corollary \ref{corollary2}). Finally, in Section \ref{example}, we give an example of a graph with two connected components, each of them a square, and describe the main characteristics of the code $C_{\mathbb{X}}(d)$ that were obtained in this work.

\section{Preliminaries} \label{preliminaries}
Let $\mathcal{G}=(\mathcal{V},\mathcal{E})$ be a simple graph with vertex set $\mathcal{V}=\{v_1,\ldots,v_n\}$ and edge set $\mathcal{E}=\{e_1,\ldots,e_s\}$.
\begin{definition} \label{graph1}
A graph $\mathcal{G}=(\mathcal{V},\mathcal{E})$ is called bipartite if there is a partition of $\mathcal{V}$ into two disjoint subsets $\mathcal{V}=\mathcal{U} \cup \mathcal{W}$, such that every edge $e \in \mathcal{E}$ joins a vertex in $\mathcal{U}$ to a vertex in $\mathcal{W}$.
\end{definition}

\begin{definition} \label{graph2}
A matching $\mathcal{M}$ in $\mathcal{G}$ is a subset of the edge set $\mathcal{E}$ such that for every $e, e' \in \mathcal{M}$ there is no vertex $v \in \mathcal{V}$ such that $e$ and $e'$ are both incidents on $v$. The matching $\mathcal{M}$ is called perfect if, for every $v \in \mathcal{V}$, there is $e \in \mathcal{M}$ which is incident on $v$.
\end{definition}

It is immediate that if $\mathcal{G}$ has a perfect matching, then $|\mathcal{V}|$ is an even number. If $\mathcal{G}$ is bipartite, then $|\mathcal{U}|=|\mathcal{W}|$. From now on we assume that $\mathcal{G}$ is bipartite, $n=2k$, and $\mathcal{M}=\{e_1,e_2,\ldots,e_k\}$ is a perfect matching. Without loss of generality we take $e_i=\{v_{2i-1},v_{2i}\}$ for all $i=1,\ldots,k$. Therefore we set $\mathcal{U}=\{v_1,v_3,\ldots,v_{2k-1}\}$ and $\mathcal{W}=\{v_2,v_4,\ldots,v_{2k}\}$. Also, from now on, we denote the degree of each vertex $v_i$ as $n_i$ for all $i=1,\ldots,n$. For this kind of graphs we notice that $n_1+n_3+\cdots+n_{2k-1}=s$.

Let $K=\mathbb{F}_q$ be a finite field with $q$ elements. The set of non--zero elements of $K$ is denoted by $K^*$, and $|\mathbb{X}|$ denotes the cardinality of any set $\mathbb{X}$.
\begin{definition} \label{torus}
The projective torus of dimension $s-1$, which is a group under componentwise multiplication, is given by
$$
\mathbb{T}_{s-1}=\{[t_1,\ldots,t_s] \in \mathbb{P}^{s-1}: (t_1,\ldots,t_s) \in (K^*)^s \},
$$
where the projective space $ \mathbb{P}^{s-1}$ is the quotient space $(K^s \setminus \{\bf{0}\}) / \sim$, where for any $\vec{x}_1$, $\vec{x}_2 \in K^s \setminus \{\bf{0}\}$, $\vec{x}_1 \sim \vec{x}_2$ if and only if there is $\lambda \in K^*$ such that $\vec{x}_1 =\lambda \vec{x}_2$.
\end{definition}

Furthermore, we need to introduce some basic facts about linear codes and how we define the linear codes parameterized by the edges of a graph $\mathcal{G}$.
We consider that $\mathcal{G}$ has no isolated vertices and it is not necessarily connected. Also we assume that $\mathcal{V}=\{v_1,\ldots,v_n\}$ is the vertex set, and ${\mathcal{E}}=\{e_1,\ldots,e_s\}$ is the edge set of $\mathcal{G}$. For each edge $e_i=\{v_j,v_k\}$, where $v_j,v_k \in \mathcal{V}$, let $(t_1,\ldots,t_n)^{e_i}=t_jt_k$ for $(t_1,\ldots,t_n) \in (K^*)^n$.
\begin{definition} \label{X}
The toric set $\mathbb{X}$ parameterized by the edges of the graph $\mathcal{G}$ is the following subset of the projective torus $\mathbb{T}_{s-1}$:
\begin{equation} \label{toricset1}
\mathbb{X}=\{[(t_1,\ldots,t_n)^{e_1},\ldots,(t_1,\ldots t_n)^{e_s}] \in \mathbb{P}^{s-1}: t_i \in K^*\}.
\end{equation}
\end{definition}

Equation (\ref{toricset1}) works for any simple graph. However, when we work with the case where $\mathcal{G}$ is a bipartite graph with a perfect matching and with $m$ connected components, there is no loss of generality  if we assume that the toric set parameterized by its edges is given by
$$
\mathbb{X}=\{[t_1t_2,\ldots,t_1t_{2i_1},t_3t_4,\ldots,t_3t_{2i_3},\ldots,t_{2k-1}t_{2k},  \ldots,  t_{2k-1}t_{2i_{2k-1}}] \
  \in \mathbb{P}^{s-1}: t_i \in K^*\},
$$
where the first $n_1$ entries are the edges incident on $v_1$ (starting with $t_1t_2$), the second block has $n_3$ entries which are the edges incident on $v_3$ (starting with $t_3t_4$), and so on until the last block of entries which are the $n_{2k-1} $ edges incident with $v_{2k-1}$ (starting with $t_{2k-1}t_{2k}$). Each block of entries  starts with the elements of the perfect matching $\mathcal{M}$.

\begin{definition}
A linear code $C$ is a subspace of $K^l$, where $l$ is a positive integer. This integer $l$ is known as its length. Its dimension as a linear space over $K$ is called its dimension, and it is denoted by $\dim_K C$. Finally, the minimum distance of a code $C$, $\delta_C$, is defined as follows:
$$
\delta_C=\min \{w_H(v) : v \in C, \,\, v \neq 0\},
$$
where $w_H(v)$ is the Hamming weight of $v$, that is, the number of non--zero entries of $v$. These three numbers (length, dimension, and minimum distance) are called the main parameters of a code $C$, and they are related by the Singleton bound:
$$
\delta_C \leq l-\dim_K C+1.
$$
\end{definition}

Moreover, let $S=K[X_1,\ldots,X_s]=\oplus_{d \geq 0} S_d$ be a polynomial ring and $\mathbb{X}=\{P_1,\ldots,P_{|\mathbb{X}|}\}$ be the toric set parameterized by the edges of the graph $\mathcal{G}$.
\begin{definition} \label{param1}
The code of order $d$ parameterized by the edges of the graph $\mathcal{G}$, which is denoted by $C_{\mathbb{X}}(d)$, is the following subspace of $K^{|\mathbb{X}|}$:
$$
C_{\mathbb{X}}(d)=\left\{\left( \frac{f(P_1)}{X_1^d(P_1)},\ldots,\frac{f(P_{|{\mathbb{X}}|})}{X_1^d(P_{|\mathbb{X}|})}\right) : f \in S_d\right\},
$$ 

that is, $C_{\mathbb{X}}(d)$ is the image of the following surjective linear transformation:
\begin{align*}
S_d \longrightarrow K^{|\mathbb{X}|}, \hspace{1.4cm}\\
f \mapsto \left( \frac{f(P_1)}{X_1^d(P_1)},\ldots,\frac{f(P_{|{\mathbb{X}}|})}{X_1^d(P_{|\mathbb{X}|})}\right).
\end{align*}

\end{definition}
\begin{definition}
The ideal of $S$ that is spanned by the homogeneous polynomials that vanish on $\mathbb{X}$ is called the vanishing ideal of $\mathbb{X}$, and it is denoted by $I_{\mathbb{X}}$. It is a graded ideal, $I_{\mathbb{X}}=\oplus_{d \geq 0} I_{\mathbb{X}}(d)$, and its main characteristics can be seen in \cite{algcodes}.
\end{definition}

In the case of the code $C_{\mathbb{X}}(d)$, its length is $|\mathbb{X}|$, its dimension is given by the Hilbert function $\dim_K C_{\mathbb{X}}(d)=H_{\mathbb{X}}(d)=\dim_K (S_d/I_{\mathbb{X}}(d))$, and its minimum distance is denoted by $\delta_{\mathbb{X}}(d)$. It is known that the Hilbert function is a strictly increasing function until it stabilizes.

\begin{definition} \label{regularity}
The regularity index of $S/I_{\mathbb{X}}$ is the least integer $d$ such that $H_{\mathbb{X}}(d)=|\mathbb{X}|$. It is denoted by ${\rm{reg}} \, (S/I_{\mathbb{X}})$. Actually, $H_{\mathbb{X}}(d)=|\mathbb{X}|$ for all $d \geq {\rm{reg}} \, (S/I_{\mathbb{X}})$. For these cases, $C_{\mathbb{X}}(d)=K^{|\mathbb{X}|}$, and then $\delta_{\mathbb{X}}(d)=1$. Therefore the only interesting codes $C_{\mathbb{X}}(d)$ are those for which $d<{\rm{reg}} \, (S/I_{\mathbb{X}})$.
\end{definition}

Finally, we need to introduce the concept of a complete intersection.

\begin{definition} \label{ci}
A set of points $\mathbb{X} \subseteq \mathbb{P}^{s-1}$ is called a complete intersection if its vanishing ideal $I_{\mathbb{X}}$ is generated by a regular sequence of $s-1$ elements, that is,
$
I_{\mathbb{X}}=(f_1,\ldots,f_{s-1}),
$
such that $f_1$ is not a zero divisor of $S$, and $f_i$ is not a zero divisor of $S/(f_1,\ldots,f_{i-1})$ for $i=2,\ldots,s-1$.
\end{definition}

\section{Some bounds} \label{bounds}
In the following theorem, we give the length of the code $C_{\mathbb{X}}(d)$ parameterized by the edges of a bipartite graph with $m$ co\-nnec\-ted components and with a perfect matching, and also we give some bounds for the regularity index of $S/I_{\mathbb{X}}$, the dimension, and the minimum distance. It is worth mentioning that the bounds for the minimum distance of $C_{\mathbb{X}}(d)$ depend on the value of the minimum distance when the toric set is the projective torus, which was computed in \cite[Theorem 3.5]{ci-codes}.

\begin{theorem} \label{theorem1}
If $\mathcal{G}$ is a bipartite graph with a perfect matching, with $m$ co\-nnec\-ted components, and $\mathbb{X}$ is the toric set parameterized by its edges, then:

\begin{enumerate}
\item The length of the code $C_{\mathbb{X}}(d)$ is given by: \label{1}
$$
|\mathbb{X}|=(q-1)^{n-m-1}.
$$
\item The regularity index of $S/I_{\mathbb{X}}$ is bounded by: \label{2}
$$
\left\lceil \frac{(q-2)(n-1)}{2 (q-1)^{m}}\right\rceil \leq {\rm{reg}} \, (S/I_{{\mathbb{X}}}) \leq (q-2)(k-1)+(q-1)^{k-m}-1,
$$
where $n=2k$.
\item The dimension of the code $C_{\mathbb{X}}(d)$ is bounded by: \label{3}
$$
\dim_K(C_{\mathbb{X}}(d)) \geq \sum_{j=0}^{\lfloor d/(q-1) \rfloor} (-1)^j \binom{k-1}{j} \binom{k-1+d-j(q-1)}{k-1}.
$$
\item The minimum distance of the code $C_{\mathbb{X}}(d)$ is bounded by: \label{4}
$$
\left\lceil  \frac{\delta_{\mathbb{T}_{n-1}}(2d)}{(q-1)^m}  \right\rceil \leq \delta_{\mathbb{X}}(d) \leq (q-1)^{k-m} \delta_{\mathbb{T}_{k-1}}(d).
$$
\end{enumerate}

\begin{proof}
Assertion (\ref{1}) follows directly from \cite[Theorem 3.2]{vaz}. Moreover, the lower bounds for the regularity index and the minimum distance given in (\ref{2}) and (\ref{4}) follow from \cite[Theorem 2]{sarabia6}, because $\gamma$, the number on non--bipartite components of $\mathcal{G}$, is equal to zero.

Furthermore, let $\mathcal{G}'=(\mathcal{V},\mathcal{E}')$ be the subgraph of $\mathcal{G}$ with the same vertex set, but with $\mathcal{E}'=\mathcal{M}$. Let $\mathbb{X}$ and $\mathbb{X}'$ be the toric sets associated with the edges of $\mathcal{G}$ and $\mathcal{G}'$, respectively. Thus
\begin{equation}
\mathbb{X}'=\{[t_1t_2,t_3t_4, \cdots,t_{2k-1}t_{2k}] \in \mathbb{P}^{k-1}: t_i \in K^*\}, \label{Perfect}
\end{equation}
and therefore $\mathbb{X}'=\mathbb{T}_{k-1}$. Then $|\mathbb{X}'|=(q-1)^{k-1}$, and we notice that
$$
|\mathbb{X}|=(q-1)^{n-m-1}=|\mathbb{X}'| (q-1)^{k-m}.
$$

Inequality (\ref{3}) and the upper bounds for the regularity index and the mi\-ni\-mum distance given in (\ref{2}) y (\ref{4}) follow easily from \cite[Theorem 3]{sarabia6}.
\end{proof}
\end{theorem}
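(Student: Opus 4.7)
The plan is to deduce all four assertions from previously established general results about toric sets parameterized by edges of simple graphs, after specializing to the bipartite case and identifying a useful auxiliary subgraph. The key preparatory observation, which drives (2)--(4), is that the matching $\mathcal{M}=\{e_1,\ldots,e_k\}$ itself defines a spanning subgraph $\mathcal{G}'=(\mathcal{V},\mathcal{M})$ whose associated toric set is
$$
\mathbb{X}'=\{[t_1t_2,t_3t_4,\ldots,t_{2k-1}t_{2k}]\in\mathbb{P}^{k-1}:t_i\in K^*\}.
$$
Since the factors $t_{2i-1}t_{2i}$ involve disjoint variables, each one varies independently over $K^*$, so $\mathbb{X}'$ is exactly the projective torus $\mathbb{T}_{k-1}$; in particular $|\mathbb{X}'|=(q-1)^{k-1}$.

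First I would dispatch (1) by direct citation of \cite[Theorem~3.2]{vaz}: the general formula for $|\mathbb{X}|$ involves the number $\gamma$ of non-bipartite connected components of $\mathcal{G}$, which is $0$ in our setting, and specializes to $(q-1)^{n-m-1}$. Next, a quick arithmetic check gives the length relation
$$
|\mathbb{X}|=(q-1)^{n-m-1}=(q-1)^{2k-m-1}=|\mathbb{X}'|\,(q-1)^{k-m},
$$
which is the bridge needed to transfer bounds between the codes on $\mathbb{X}$ and on $\mathbb{X}'=\mathbb{T}_{k-1}$. The lower bounds in (2) and (4) then fall out immediately by invoking \cite[Theorem~2]{sarabia6} with $\gamma=0$, since that theorem provides general lower bounds valid for any simple graph in terms of $q$, $n$, $m$, $\gamma$, and $\delta_{\mathbb{T}_{n-1}}(2d)$.

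For (3) and the upper bounds in (2) and (4), the route is to apply \cite[Theorem~3]{sarabia6}, whose hypotheses compare $\mathbb{X}$ with the toric set of a spanning subgraph: taking that subgraph to be $\mathcal{G}'$, the relevant invariants of $\mathbb{X}'$ are the known ones for the projective torus $\mathbb{T}_{k-1}$. In particular the Hilbert function of $\mathbb{T}_{k-1}$, which appears in (3), is the formula
$$
H_{\mathbb{T}_{k-1}}(d)=\sum_{j=0}^{\lfloor d/(q-1)\rfloor}(-1)^j\binom{k-1}{j}\binom{k-1+d-j(q-1)}{k-1},
$$
coming from the fact that $I_{\mathbb{T}_{k-1}}$ is the complete intersection generated by $T_i^{q-1}-T_1^{q-1}$, $i=2,\ldots,k$ (see \cite{duursma}). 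Plugging $|\mathbb{X}|/|\mathbb{X}'|=(q-1)^{k-m}$ and the regularity index $\mathrm{reg}(S/I_{\mathbb{T}_{k-1}})=(q-2)(k-1)$ into \cite[Theorem~3]{sarabia6} yields the stated upper bounds verbatim. The only real content beyond citing these results is the identification $\mathbb{X}'=\mathbb{T}_{k-1}$, which I expect to be the main conceptual step; everything else is bookkeeping with exponents of $q-1$.
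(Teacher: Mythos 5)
Your proposal is correct and follows essentially the same route as the paper: assertion (1) by citing \cite[Theorem 3.2]{vaz}, the lower bounds in (2) and (4) from \cite[Theorem 2]{sarabia6} with $\gamma=0$, and the remaining bounds from \cite[Theorem 3]{sarabia6} after identifying the toric set of the matching subgraph $\mathcal{G}'=(\mathcal{V},\mathcal{M})$ with the projective torus $\mathbb{T}_{k-1}$ and noting $|\mathbb{X}|=|\mathbb{X}'|(q-1)^{k-m}$. The extra detail you supply (why the matching's toric set is the full torus, and the explicit Hilbert function of $\mathbb{T}_{k-1}$) is consistent with, and slightly more explicit than, the paper's argument.
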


\section{Vanishing ideals} \label{vanishing}
We continue using the notation of the last sections. Let $n_i=\deg v_i$ for $i=1,\ldots,n$. Of course, $n_1+n_3+\cdots +n_{2k-1}=s$, the number of edges of the graph $\mathcal{G}$. Moreover,
let $S=K[X_1,\ldots,X_s]$, as in Section \ref{preliminaries}, and $R=K[Y_1,Y_3,\ldots,Y_{2k-1}]$ be two polynomial rings. 
From now on, let $\mathbb{Y}$ be the following subset of the projective space $\mathbb{P}^{s-1}$:
\begin{equation} \label{Intersection}
\mathbb{Y}=\{[t_1,\ldots,t_{1},t_3,\ldots,t_{3},\ldots,t_{2k-1},\ldots,t_{2k-1}]  \in \mathbb{P}^{s-1}: t_i \in \mathbb{K}^{*}\},
\end{equation}
where $t_i$ appears $n_i$ times, for all $i=1,3,\ldots,2k-1$. Clearly, $\mathbb{Y} \subseteq \mathbb{X}$. Actually, we get the following result:

\begin{theorem} \label{theorem2}
$\mathbb{Y}$ is a complete intersection. In fact, $I_{\mathbb{Y}}$ is spanned by the union of the following sets:
\begin{enumerate}
\item $\mathbb{W}_0=\{X_{n_1+1}^{q-1}-X_1^{q-1}, X_{n_1+n_3+1}^{q-1}-X_1^{q-1},\ldots,X_{n_1+n_3+\cdots+n_{2k-3}+1}^{q-1}-X_1^{q-1}\}$.
\\ \\
\item $\mathbb{W}_1=\left\{ 
\begin{array}{ccc}
\emptyset & {\mbox{if}} & n_1=1, \\
&& \\ 
\{X_i-X_1\}_{i=2}^{n_1} &  {\mbox{if}} & n_1 \geq 2.
\end{array} \right.
$
\\ \\
\item $\mathbb{W}_3=\left\{ 
\begin{array}{ccc}
\emptyset & {\mbox{if}} & n_3=1, \\
&& \\
\{X_i-X_3\}_{i=n_1+2}^{n_1+n_3} &  {\mbox{if}} & n_3 \geq 2.
\end{array} \right.
$
\\  \\ \vdots \hspace{1cm} \vdots \hspace{1cm} \vdots \hspace{1cm} \vdots \hspace{1cm} \vdots
\\ \\
\item $\mathbb{W}_{2k-1}=\left\{ 
\begin{array}{ccc}
\emptyset & {\mbox{if}} & n_{2k-1}=1, \\
&& \\
\{X_i-X_{2k-1}\}_{i=n_1+n_3+\cdots +n_{2k-3}+2}^{s} &  {\mbox{if}} & n_{2k-1} \geq 2.
\end{array} \right.
$
\end{enumerate}
\end{theorem}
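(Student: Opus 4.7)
The plan is to let $J$ denote the homogeneous ideal of $S$ generated by the union $\mathbb{W}_0 \cup \mathbb{W}_1 \cup \cdots \cup \mathbb{W}_{2k-1}$ and establish the theorem in three steps: verify $J \subseteq I_{\mathbb{Y}}$; show that the listed generators form a regular sequence of length $s-1$; and conclude via a reducedness/multiplicity comparison that $J = I_{\mathbb{Y}}$.

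First I would evaluate each generator at a typical point $P = [t_1,\ldots,t_1,t_3,\ldots,t_3,\ldots,t_{2k-1},\ldots,t_{2k-1}]$ of $\mathbb{Y}$. Every linear form in $\mathbb{W}_{2j-1}$ vanishes at $P$ because all coordinates of $P$ in block $j$ equal $t_{2j-1}$, and the $i$-th binomial in $\mathbb{W}_0$ evaluates to $t_{2i-1}^{q-1} - t_1^{q-1} = 1 - 1 = 0$ by Fermat's little theorem in $K^*$. So $J \subseteq I_{\mathbb{Y}}$, and a count gives exactly $(k-1) + \sum_{j=1}^{k}(n_{2j-1} - 1) = s - 1$ generators, the number required by Definition \ref{ci}.

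The main step is the regularity claim. The $s - k$ linear forms in $\mathbb{W}_1 \cup \cdots \cup \mathbb{W}_{2k-1}$ each contain a distinct ``non-leader'' variable that appears in no other generator, so they trivially form a regular sequence, and quotienting by them produces an isomorphism with $R = K[Y_1,Y_3,\ldots,Y_{2k-1}]$ under which every $X_j$ lying in the $i$-th block is sent to $Y_{2i-1}$. Under this identification the elements of $\mathbb{W}_0$ map onto $\{Y_{2i-1}^{q-1} - Y_1^{q-1}\}_{i=2}^{k}$, which are known to generate the vanishing ideal $I_{\mathbb{T}_{k-1}}$ of the projective torus and to form a regular sequence (the well-known complete intersection structure of $\mathbb{T}_{k-1}$ cited in the introduction). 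Lifting back, the full list of $s - 1$ elements is a regular sequence in $S$, and we obtain a graded isomorphism of $K$-algebras $S/J \cong R/I_{\mathbb{T}_{k-1}}$.

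To finish, I would transfer the known properties of $R/I_{\mathbb{T}_{k-1}}$ across this isomorphism: it is reduced with Hilbert polynomial equal to $|\mathbb{T}_{k-1}| = (q-1)^{k-1}$, so the same holds for $S/J$. Thus $J$ is a radical ideal and $|V(J)| = (q-1)^{k-1}$ in $\mathbb{P}^{s-1}$. Since $J \subseteq I_{\mathbb{Y}}$ implies $V(J) \supseteq \mathbb{Y}$, and the natural bijection $\mathbb{Y} \leftrightarrow \mathbb{T}_{k-1}$ yields $|\mathbb{Y}| = (q-1)^{k-1}$, it follows that $V(J) = \mathbb{Y}$ and therefore $J = I_{\mathbb{Y}}$. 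The main obstacle is this last upgrade: having $s - 1$ regular elements inside $I_{\mathbb{Y}}$ by itself only produces a complete intersection ideal $J$ of the expected height, and one really needs the radicalness of $J$, transported from the reducedness of $R/I_{\mathbb{T}_{k-1}}$, to promote the containment to the equality $J = I_{\mathbb{Y}}$.
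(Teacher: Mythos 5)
Your argument is correct, and it reaches the conclusion by a genuinely different route from the paper. Both proofs hinge on the same two ingredients: the linear forms in $\mathbb{W}_1\cup\cdots\cup\mathbb{W}_{2k-1}$ let one eliminate all non-leader variables, and the known fact (\cite{GRH}) that the images of $\mathbb{W}_0$ generate $I_{\mathbb{T}_{k-1}}$. Where you diverge is in how the containment $I_{\mathbb{Y}}\subseteq(\mathbb{W})$ is obtained. The paper argues element by element: given $f\in I_{\mathbb{Y}}$, it divides by the linear forms (lex order, leading terms the non-leader variables) to get a remainder $r$ involving only the leader variables, evaluates at points of $\mathbb{Y}$ to see that $r$ vanishes on a copy of $\mathbb{T}_{k-1}$, and concludes $r\in(\mathbb{W}_0)$. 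You instead build the graded isomorphism $S/J\cong R/I_{\mathbb{T}_{k-1}}$, transport reducedness and the Hilbert polynomial, and close the gap by a point count plus the Nullstellensatz for radical ideals. Your route has the merit of explicitly exhibiting the $s-1$ generators as a regular sequence (the paper leaves the complete-intersection property implicit, relying on the generic fact that $s-1$ generators of a height-$(s-1)$ ideal in a Cohen--Macaulay ring form a regular sequence), but it is heavier at the finish: over a finite field one should note that the degree of the reduced scheme $V(J)$ bounds the number of $\bar{K}$-points, so the $(q-1)^{k-1}$ rational points of $\mathbb{Y}\subseteq V(J)$ exhaust $V(J)(\bar K)$ before invoking $J=I(V(J))$. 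A slightly shorter finish available from your own setup: the composite $S\to S/J\cong R/I_{\mathbb{T}_{k-1}}$ is $f\mapsto\theta(f)+I_{\mathbb{T}_{k-1}}$, and since $f(P)=\theta(f)(Q)$ for corresponding points $P\in\mathbb{Y}$, $Q\in\mathbb{T}_{k-1}$, one gets $I_{\mathbb{Y}}=\theta^{-1}(I_{\mathbb{T}_{k-1}})=J$ directly, with no appeal to reducedness or counting.
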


\begin{proof}
We want to show that $(\mathbb{W})=I_{\mathbb{Y}}$, where $\mathbb{W}:=\mathbb{W}_0 \cup \mathbb{W}_1\cup  \mathbb{W}_3 \cup \cdots \cup \mathbb{W}_{2k-1}$.
At first, we notice that the number of polynomials in $\mathbb{W}$ is given by
\begin{align*}
|\mathbb{W}| & =|\mathbb{W}_0|+|\mathbb{W}_1| + |\mathbb{W}_3|+ \cdots + |\mathbb{W}_{2k-1}| \\
& = (k-1)+(n_1-1)+(n_3-1)+\cdots +(n_{2k-1}-1) \\
& = k-1+s-k=s-1,
\end{align*}
because $n_1+n_3+\cdots + n_{2k-1}=s$.
On the other hand, it is easy to see that $\mathbb{W} \subseteq I_{\mathbb{Y}}$. 
Furthermore, let $f \in I_{\mathbb{Y}}$. We use the lexicographic ordering $X_s>X_{s-1}> \cdots > X_1$. By the division algorithm, $f$ can be written as:
$$
f=\sum_{i=2}^{n_1} f_i(X_i-X_1) +\sum_{i=n_1+2}^{n_1+n_3}  g_i(X_i-X_3)+ \cdots +  \sum_{i=n_1+n_3+ \cdots +n_{2k-3}+2}^s 
 h_i(X_i-X_{2k-1})+r,
$$
where $f_i,g_i \ldots, h_i, r \in S$, and $r=0$ (in this case $f \in (\mathbb{W})$, and $I_{\mathbb{Y}} \subseteq (\mathbb{W})$) or $r$ is a $K$--linear combination of monomials, none of which is divisible by any of 
$$
X_2,\ldots,X_{n_1},X_{n_1+2},\ldots,X_{n_1+n_3},\ldots,X_{n_1+n_3+\cdots +n_{2k-3}+2},\ldots,X_s.
$$

Therefore, $$r(X_1,\ldots,X_s)=r(X_1,X_{n_1+1},X_{n_1+n_3+1},\ldots,X_{n_1+n_3+\cdots +n_{2k-3}+1}).$$ 

Also, we observe that the projective torus $\mathbb{T}_{k-1}$ can be written as
$$
\mathbb{T}_{k-1}=\{[t_1,t_{n_1+1},t_{n_1+n_3+1},\ldots,t_{n_1+n_3+\cdots +n_{2k-3}+1}]: t_i \in K^*\},
$$
and thus (\cite[Theorem 1]{GRH})
\begin{equation} \label{torus2}
I_{\mathbb{T}_{k-1}}=(X_{n_1+1}^{q-1}-X_1^{q-1},  X_{n_1+n_3+1}^{q-1}  -X_1^{q-1},
\ldots,X_{n_1+n_3+\cdots+n_{2k-3}+1}^{q-1}-X_1^{q-1}).
\end{equation}

Let $P:=[t_1,\ldots,t_{1},t_3,\ldots,t_{3},\ldots,t_{2k-1},\ldots,t_{2k-1}] \in \mathbb{Y}$. Then
$$
0=f(P)=r(P)=r(t_1,t_{n_1+1},t_{n_1+n_3+1},\ldots,t_{n_1+n_2+\cdots +n_{2k-3}+1}),
$$
for all $t_i \in K^*$.
Therefore $r \in I_{\mathbb{T}_{k-1}}$, and by Equation (\ref{torus2}), $r \in (\mathbb{W}_0)$, and thus $f \in (\mathbb{W})$. That is, $I_{\mathbb{Y}}=(\mathbb{W})$, and the claim follows
\end{proof}

\noindent
On the other hand,
let $\theta$ be the map
\begin{align*}
& \hspace{4.5cm} \theta: S \rightarrow R, \\
& f(X_1,\ldots,X_s) \mapsto f(Y_1,\ldots,Y_1,Y_3,\ldots,Y_3,\ldots Y_{2k-1},\ldots,Y_{2k-1}),
\end{align*}
where $Y_i$ appears $n_i$ times, for all $i=1,3,\ldots,2k-1$.
We notice that $\theta(X_i)=Y_1$ for all $i=1,\ldots,n_1$, $\theta(X_i)=Y_3$ for all $i=n_1+1,\ldots,n_1+n_3$, and so on until $\theta(X_i)=Y_{2k-1}$ for all $i=n_1+n_3+ \cdots + n_{2k-3}+1,\ldots,s$.  
Moreover, the following result relates the vanishing ideals of $\mathbb{X}$ and $\mathbb{T}_{k-1}$.

\begin{proposition} \label{prop1}
With the notation introduced above, $\theta$ is a ring epimorphism and
$$
\theta(I_{\mathbb{X}})=I_{\mathbb{T}_{k-1}}.
$$
\end{proposition}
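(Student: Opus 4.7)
The plan is to verify the three claims separately: $\theta$ is a ring homomorphism, $\theta$ is surjective, and $\theta(I_{\mathbb{X}})=I_{\mathbb{T}_{k-1}}$. The first is automatic, since $\theta$ is prescribed on the polynomial generators $X_1,\ldots,X_s$ of $S$ and so extends uniquely to a $K$-algebra map $S\to R$. Surjectivity is also immediate: each $Y_{2i-1}$ equals $\theta(X_j)$ for any $X_j$ in the $i$-th block; for instance $Y_1=\theta(X_1)$, $Y_3=\theta(X_{n_1+1})$, and so on up to $Y_{2k-1}=\theta(X_{n_1+\cdots+n_{2k-3}+1})$.

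For the inclusion $\theta(I_{\mathbb{X}})\subseteq I_{\mathbb{T}_{k-1}}$, I would exploit the inclusion $\mathbb{Y}\subseteq\mathbb{X}$ noted just before Theorem \ref{theorem2}. This is made concrete by taking any parameter $(t_1,t_3,\ldots,t_{2k-1})\in(K^*)^k$ for a point of $\mathbb{Y}$ and lifting it to a parameter $(u_1,\ldots,u_n)\in(K^*)^n$ of $\mathbb{X}$ via $u_{2i-1}=t_{2i-1}$ and $u_{2i}=1$; then every edge $\{v_{2\alpha-1},v_j\}$ contributes the coordinate $u_{2\alpha-1}u_j=t_{2\alpha-1}$, producing exactly the prescribed point of $\mathbb{Y}$. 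Since $\theta$ sends each variable to a variable, it preserves the grading, so $\theta(f)$ is homogeneous whenever $f$ is. For homogeneous $f\in I_{\mathbb{X}}$ and any $[t_1,t_3,\ldots,t_{2k-1}]\in\mathbb{T}_{k-1}$, the value $\theta(f)(t_1,t_3,\ldots,t_{2k-1})$ is precisely $f$ evaluated at the corresponding point of $\mathbb{Y}\subseteq\mathbb{X}$, and hence vanishes. Thus $\theta(f)\in I_{\mathbb{T}_{k-1}}$.

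For the reverse inclusion $I_{\mathbb{T}_{k-1}}\subseteq\theta(I_{\mathbb{X}})$, I would first observe that surjectivity of $\theta$ ensures $\theta(I_{\mathbb{X}})$ is an ideal of $R$, so it suffices to find a generating set of $I_{\mathbb{T}_{k-1}}$ inside $\theta(I_{\mathbb{X}})$. By equation (\ref{torus2}), read in the $Y$ variables, $I_{\mathbb{T}_{k-1}}$ is generated by the binomials $Y_{2i-1}^{q-1}-Y_1^{q-1}$ for $i=2,\ldots,k$, and each of these is the $\theta$-image of $X_{n_1+n_3+\cdots+n_{2i-3}+1}^{q-1}-X_1^{q-1}\in S$. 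Since $\mathbb{X}\subseteq\mathbb{T}_{s-1}$, these preimage binomials lie in $I_{\mathbb{T}_{s-1}}\subseteq I_{\mathbb{X}}$, so the generators of $I_{\mathbb{T}_{k-1}}$ all lie in $\theta(I_{\mathbb{X}})$, and the inclusion follows.

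The main obstacle is not mathematical but notational: keeping straight the block decomposition of the $X$-indices and matching each block to the correct $Y_{2i-1}$, while simultaneously tracking three different projective tori $\mathbb{T}_{s-1}$, $\mathbb{T}_{k-1}$, and the intermediate set $\mathbb{Y}$. Once that bookkeeping is set up, both halves of the identity reduce to a short evaluation argument and a generator chase using the facts already recorded in Section \ref{vanishing}.
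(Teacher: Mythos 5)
Your proposal is correct and follows essentially the same route as the paper: the inclusion $\theta(I_{\mathbb{X}})\subseteq I_{\mathbb{T}_{k-1}}$ via evaluation at points of $\mathbb{Y}\subseteq\mathbb{X}$, and the reverse inclusion by pulling back the binomial generators of $I_{\mathbb{T}_{k-1}}$ to elements of $I_{\mathbb{T}_{s-1}}\subseteq I_{\mathbb{X}}$. Your two added touches --- the explicit lift $u_{2i-1}=t_{2i-1}$, $u_{2i}=1$ witnessing $\mathbb{Y}\subseteq\mathbb{X}$, and the remark that surjectivity of $\theta$ makes $\theta(I_{\mathbb{X}})$ an ideal so that checking generators suffices --- are details the paper leaves implicit, and they tighten the argument without changing it.
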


\begin{proof}
The fact that $\theta$ is a ring epimorphism follows directly from the de\-fi\-ni\-tions. Let $f \in I_{\mathbb{X}}$ and $Q=[t_1,t_3,\ldots,t_{2k-1}] \in \mathbb{T}_{k-1}$. Also let $P=[t_1,\ldots,t_1,t_3,\ldots,t_3,\ldots,t_{2k-1},\ldots,t_{2k-1}] \in \mathbb{Y} \subseteq \mathbb{X}$. Therefore
$$
\theta(f)(Q)=f(P)=0.
$$
Thus $\theta(f) \in I_{\mathbb{T}_{k-1}}$ and then $\theta(I_{\mathbb{X}}) \subseteq I_{\mathbb{T}_{k-1}}$.
On the other hand, as $\mathbb{X} \subseteq \mathbb{T}_{s-1}$, we get that $I_{\mathbb{T}_{s-1}} \subseteq I_{\mathbb{X}}$. Furthermore, $I_{\mathbb{T}_{k-1}}$ can be written as
$$
I_{\mathbb{T}_{k-1}}=(Y_3^{q-1}-Y_1^{q-1},\ldots,Y_{2k-1}^{q-1}-Y_1^{q-1}).
$$

But
$$
Y_3^{q-1}-Y_1^{q-1} =\theta(X_{n_1+1}^{q-1}-X_1^{q-1}),\ldots, Y_{2k-1}^{q-1}-Y_1^{q-1} 
=\theta(X_{n_1+n_3+\cdots+n_{2k-3}+1}^{q-1}-X_1^{q-1}).
$$

Moreover,
$$
\{X_{n_1+1}^{q-1}-X_1^{q-1},\ldots, X_{n_1+n_3+\cdots+n_{2k-3}+1}^{q-1}-X_1^{q-1}\} \subseteq I_{\mathbb{T}_s-1} \subseteq I_{\mathbb{X}}.
$$

Therefore $I_{\mathbb{T}_{k-1}} \subseteq \theta (I_{\mathbb{X}})$, and the claim follows.
\end{proof}

Since $\mathbb{Y} \subseteq \mathbb{X}$ we obtain that $I_{\mathbb{X}} \subseteq I_{\mathbb{Y}}$. The following result relates the vanishing ideals of $\mathbb{X}$, $\mathbb{Y}$, and the map $\theta$.

\begin{proposition} \label{prop2}
The vanishing ideal of $\mathbb{Y}$ is given by
$$
I_{\mathbb{Y}}=I_{\mathbb{X}}+ \ker \theta.
$$
\end{proposition}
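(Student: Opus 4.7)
The plan is to prove the two inclusions separately, leaning on the explicit generators of $I_{\mathbb{Y}}$ obtained in Theorem \ref{theorem2}.

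For the easy direction $I_{\mathbb{X}} + \ker\theta \subseteq I_{\mathbb{Y}}$, the containment $I_{\mathbb{X}} \subseteq I_{\mathbb{Y}}$ is immediate from $\mathbb{Y} \subseteq \mathbb{X}$. To see $\ker\theta \subseteq I_{\mathbb{Y}}$, I would take an arbitrary $f \in \ker\theta$ and an arbitrary point
$$
P=[t_1,\ldots,t_1,t_3,\ldots,t_3,\ldots,t_{2k-1},\ldots,t_{2k-1}] \in \mathbb{Y},
$$
and observe that by the very definition of $\theta$ (which sends the $n_i$ variables in the $i$-th block all to $Y_i$), the evaluation $f(P)$ equals $\theta(f)$ evaluated at $(t_1,t_3,\ldots,t_{2k-1})$. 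Since $\theta(f)=0$ in $R$, this vanishes.

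For the reverse inclusion $I_{\mathbb{Y}} \subseteq I_{\mathbb{X}} + \ker\theta$, I would use Theorem \ref{theorem2} and verify that each of the explicit generating sets $\mathbb{W}_0,\mathbb{W}_1,\mathbb{W}_3,\ldots,\mathbb{W}_{2k-1}$ lies in the appropriate summand. Every element of $\mathbb{W}_0$ has the form $X_j^{q-1}-X_1^{q-1}$, a polynomial that vanishes on the full projective torus $\mathbb{T}_{s-1}$; since $\mathbb{X} \subseteq \mathbb{T}_{s-1}$, we get $\mathbb{W}_0 \subseteq I_{\mathbb{T}_{s-1}} \subseteq I_{\mathbb{X}}$. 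On the other hand, each polynomial $X_i-X_j$ appearing in $\mathbb{W}_1 \cup \mathbb{W}_3 \cup \cdots \cup \mathbb{W}_{2k-1}$ has its two variables indexed within the same block, so by construction $\theta(X_i)=\theta(X_j)$ and consequently $X_i-X_j \in \ker\theta$. Combining these observations gives the required containment of a generating set.

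I do not expect any serious obstacle here once Theorem \ref{theorem2} is available: the proof is essentially a bookkeeping verification that the generators of $I_{\mathbb{Y}}$ partition cleanly into torus-type relations (already in $I_{\mathbb{X}}$) and block-identification relations (in $\ker\theta$). The only point deserving care is the evaluation argument for $\ker\theta \subseteq I_{\mathbb{Y}}$: because $f$ is homogeneous and $P$ is a projective point, one should note that any affine representative of a point in $\mathbb{Y}$ repeats each coordinate $t_i$ exactly $n_i$ times, so the substitution makes $f(P)=\theta(f)(t_1,t_3,\ldots,t_{2k-1})$ unambiguous up to the usual scalar ambiguity, which is harmless since $\theta(f)$ is identically zero.
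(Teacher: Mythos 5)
Your proposal is correct and follows essentially the same route as the paper: the inclusion $I_{\mathbb{Y}}\subseteq I_{\mathbb{X}}+\ker\theta$ is obtained by placing the generators $\mathbb{W}_0$ inside $I_{\mathbb{T}_{s-1}}\subseteq I_{\mathbb{X}}$ and the block-identification generators inside $\ker\theta$, and the reverse inclusion by evaluating $f+g$ at a point of $\mathbb{Y}$ exactly as the paper does. No substantive differences.
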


\begin{proof}
We notice that $\mathbb{W}_0 \subseteq I_{\mathbb{T}_{s-1}} \subseteq I_{\mathbb{X}}$ and $\mathbb{W}_{2i-1} \subseteq \ker \theta$ for all $i=1,\ldots,k$. Therefore, by using Theorem \ref{theorem1}, we conclude that
\begin{equation} \label{ideal}
I_{\mathbb{Y}} \subseteq I_{\mathbb{X}}+ \ker \theta.
\end{equation} 

Furthermore, let $f \in I_{\mathbb{X}}$, $g \in \ker \theta$, and $$P:=[t_1,\ldots,t_{1},t_3,\ldots,t_{3},\ldots,t_{2k-1},\ldots,t_{2k-1}] \in \mathbb{Y}.$$ 

Thus, because $P \in \mathbb{Y} \subseteq \mathbb{X}$, $f(P)=0$. Moreover, because $g \in \ker \theta$,
$$
0=\theta(g)(t_1,t_3,\ldots,t_{2k-1})=g(P),
$$
and then $(f+g)(P)=0$. That is,
\begin{equation} \label{ideal2}
I_{\mathbb{X}}+\ker \theta \subseteq I_{\mathbb{Y}}.
\end{equation}

The claim follows from (\ref{ideal}) and (\ref{ideal2}).
\end{proof}

\section{Dimension and the regularity index} \label{DimReg}

Now, our goal is to compute the dimension of the code $C_{\mathbb{X}}(d)$ parameterized by the edges of a bipartite graph $\mathcal{G}$ with a perfect matching. To do this, we need the following lemma.

\begin{lemma}
Let $\psi$ the following map:
\begin{align*}
& \hspace{0.2cm} \psi: S/I_{\mathbb{X}} \rightarrow R/I_{\mathbb{T}_{k-1}}, \\
& f + I_{\mathbb{X}} \rightarrow \theta(f)+I_{\mathbb{T}_{k-1}}.
\end{align*}

Therefore, $\psi$ is a ring epimorphism, and $\ker \psi=I_{\mathbb{Y}}/I_{\mathbb{X}}$.
\end{lemma}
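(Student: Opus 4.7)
The plan is to deduce all three assertions from the two preceding propositions, essentially by a formal diagram-chase once well-definedness is in place.

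First I would verify that $\psi$ is well defined, that is, that the value $\theta(f)+I_{\mathbb{T}_{k-1}}$ depends only on the coset $f+I_{\mathbb{X}}$. Equivalently, I need $\theta(I_{\mathbb{X}})\subseteq I_{\mathbb{T}_{k-1}}$, which is the easy inclusion of Proposition \ref{prop1}. Since $\theta\colon S\to R$ is a ring homomorphism, composing with the quotient $R\to R/I_{\mathbb{T}_{k-1}}$ and passing to $S/I_{\mathbb{X}}$ yields a ring homomorphism $\psi$.

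Next, surjectivity of $\psi$ follows immediately from the surjectivity of $\theta$, which is also part of Proposition \ref{prop1}: given any coset $h+I_{\mathbb{T}_{k-1}}\in R/I_{\mathbb{T}_{k-1}}$, pick $f\in S$ with $\theta(f)=h$; then $\psi(f+I_{\mathbb{X}})=h+I_{\mathbb{T}_{k-1}}$.

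For the kernel, by definition
\[
\ker\psi=\{f+I_{\mathbb{X}}:\theta(f)\in I_{\mathbb{T}_{k-1}}\}=\theta^{-1}(I_{\mathbb{T}_{k-1}})/I_{\mathbb{X}},
\]
so the content of the statement is the identity $\theta^{-1}(I_{\mathbb{T}_{k-1}})=I_{\mathbb{Y}}$. The inclusion $I_{\mathbb{Y}}\subseteq\theta^{-1}(I_{\mathbb{T}_{k-1}})$ is obtained from Proposition \ref{prop2}: writing an element of $I_{\mathbb{Y}}$ as $f+g$ with $f\in I_{\mathbb{X}}$ and $g\in\ker\theta$, one has $\theta(f+g)=\theta(f)\in\theta(I_{\mathbb{X}})=I_{\mathbb{T}_{k-1}}$. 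For the reverse inclusion, take $f\in S$ with $\theta(f)\in I_{\mathbb{T}_{k-1}}=\theta(I_{\mathbb{X}})$; choose $h\in I_{\mathbb{X}}$ with $\theta(h)=\theta(f)$, so that $f-h\in\ker\theta$, and conclude $f=h+(f-h)\in I_{\mathbb{X}}+\ker\theta=I_{\mathbb{Y}}$, again by Proposition \ref{prop2}.

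There is no real obstacle here; the only subtle point is keeping track of which inclusion of Proposition \ref{prop1} is needed where (well-definedness uses $\theta(I_{\mathbb{X}})\subseteq I_{\mathbb{T}_{k-1}}$, whereas the kernel computation uses the equality $\theta(I_{\mathbb{X}})=I_{\mathbb{T}_{k-1}}$ to lift preimages). Once those two uses are separated cleanly, the proof reduces to one short paragraph combining Propositions \ref{prop1} and \ref{prop2}.
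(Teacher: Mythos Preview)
Your proof is correct and follows essentially the same approach as the paper: well-definedness via $\theta(I_{\mathbb{X}})\subseteq I_{\mathbb{T}_{k-1}}$ from Proposition~\ref{prop1}, surjectivity from that of $\theta$, and the kernel computation by writing $I_{\mathbb{Y}}=I_{\mathbb{X}}+\ker\theta$ (Proposition~\ref{prop2}) and lifting via $I_{\mathbb{T}_{k-1}}=\theta(I_{\mathbb{X}})$. Your reformulation of the kernel step as the equality $\theta^{-1}(I_{\mathbb{T}_{k-1}})=I_{\mathbb{Y}}$ is a slightly cleaner packaging, but the two-inclusion argument underneath is identical to the paper's.
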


\begin{proof}
At first, we notice that  the map $\psi$ is well--defined because if $f+I_{\mathbb{X}}=g+I_{\mathbb{X}}$ then $f-g \in I_{\mathbb{X}}$. Thus, because of Proposition \ref{prop1}, $\theta(f)-\theta(g)=\theta(f-g) \in \theta(I_{\mathbb{X}})=I_{\mathbb{T}_{k-1}}$, and therefore $\theta(f)+I_{\mathbb{T}_{k-1}}=\theta(g)+I_{\mathbb{T}_{k-1}}$.
Also,
that $\psi$ is a ring epimorphism follows immediately from the fact that $\theta$ is a ring epimorphism.

Let $f +I_{\mathbb{X}} \in \ker \psi$. Then $\theta(f) \in I_{\mathbb{T}_{k-1}}=\theta(I_{\mathbb{X}})$. Thus, there exists $g \in I_{\mathbb{X}}$ such that $\theta(f)=\theta(g)$. That is, $\theta(f-g)=\theta(f)-\theta(g)=0$, and it implies that $f-g \in \ker \theta$. Therefore, there exists $h \in \ker \theta$ such that $f-g=h$, that is, $f=g+h$. Then $f \in I_{\mathbb{X}}+\ker \theta=I_{\mathbb{Y}}$, and we conclude that
\begin{equation} \label{inclusion1}
\ker \psi \subseteq I_{\mathbb{Y}}/I_{\mathbb{X}}.
\end{equation}

On the other hand, let $f+I_{\mathbb{X}} \in I_{\mathbb{Y}}/I_{\mathbb{X}}$. As $f \in I_{\mathbb{Y}}=I_{\mathbb{X}}+\ker \theta$, we get that $f=f_1+f_2$ for some $f_1 \in I_{\mathbb{X}}$, $f_2 \in \ker \theta$. Thus $\theta(f)=\theta(f_1)+\theta(f_2)=\theta(f_1) \in \theta(I_{\mathbb{X}})=I_{\mathbb{T}_{k-1}}$. Therefore $\psi(f+I_{\mathbb{X}})=\theta(f)+I_{\mathbb{T}_{k-1}}=I_{\mathbb{T}_{k-1}}$, and then $f+I_{\mathbb{X}} \in \ker \psi$. It proves that
\begin{equation} \label{inclusion2}
I_{\mathbb{Y}}/I_{\mathbb{X}} \subseteq \ker \psi,
\end{equation}
and the result follows from (\ref{inclusion1}) and (\ref{inclusion2}).
\end{proof}

From now on, we set $H_{\psi}(d):=\dim_K(I_{\mathbb{Y}}(d)/I_{\mathbb{X}}(d))$ for all $d \geq 0$. The main result of this section gives the dimension of $C_{\mathbb{X}}(d)$ in terms of $H_{\psi}(d)$ and $H_{\mathbb{T}_{k-1}}(d)$, where we know that (\cite[Lemma 1]{GRH} and \cite[Corollary 2.2]{ci-codes})
$$
H_{\mathbb{T}_{k-1}}(d)=\sum_{j=0}^{\lfloor \frac{d}{q-1} \rfloor} (-1)^j \binom{k-1}{j} \binom{k-1+d-j(q-1)}{k-1}.
$$

\begin{theorem} \label{dimension} \label{TeoF}
The dimension of the code $C_{\mathbb{X}}(d)$ parameterized by the edges of a bipartite graph $\mathcal{G}$ with a perfect matching is given by
$$
\dim_K C_{\mathbb{X}}(d)=H_{\mathbb{X}}(d)=H_{\psi}(d)+H_{\mathbb{T}_{k-1}}(d),
$$
for all $d \in \mathbb{Z}, d \geq 0$.
\end{theorem}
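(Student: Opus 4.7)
The plan is to extract the formula from the graded version of the preceding lemma together with the first isomorphism theorem for $K$-vector spaces. The key observation is that the ring homomorphism $\theta : S \to R$ defined by $\theta(X_i) = Y_1, Y_3, \ldots$, or $Y_{2k-1}$ sends every variable $X_i$ to some variable $Y_j$; hence $\theta$ is a homomorphism of graded $K$-algebras, i.e.\ $\theta(S_d) \subseteq R_d$. Since $I_{\mathbb{X}}$ and $I_{\mathbb{T}_{k-1}}$ are both homogeneous ideals and $\theta(I_{\mathbb{X}}) = I_{\mathbb{T}_{k-1}}$ by Proposition \ref{prop1}, the induced map $\psi$ from the preceding lemma is a homomorphism of graded $K$-algebras. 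In particular, for every $d \geq 0$ we obtain a $K$-linear restriction
$$\psi_d : (S/I_{\mathbb{X}})_d \longrightarrow (R/I_{\mathbb{T}_{k-1}})_d.$$

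Next I would show that each $\psi_d$ is surjective with kernel $(I_{\mathbb{Y}}/I_{\mathbb{X}})_d$. Surjectivity follows because $\theta$ is a ring epimorphism that preserves degrees: any $\overline{g} \in (R/I_{\mathbb{T}_{k-1}})_d$ can be represented by some $g \in R_d$, and picking any preimage $f \in S_d$ of $g$ under $\theta$ gives $\psi_d(\overline{f}) = \overline{g}$. For the kernel, note that the preceding lemma identifies $\ker \psi = I_{\mathbb{Y}}/I_{\mathbb{X}}$ as an ideal of $S/I_{\mathbb{X}}$; since both $I_{\mathbb{X}}$ and $I_{\mathbb{Y}}$ are graded and their quotient inherits the natural grading with $(I_{\mathbb{Y}}/I_{\mathbb{X}})_d = I_{\mathbb{Y}}(d)/I_{\mathbb{X}}(d)$, we can intersect the identification $\ker \psi = I_{\mathbb{Y}}/I_{\mathbb{X}}$ with the degree-$d$ component to obtain $\ker \psi_d = I_{\mathbb{Y}}(d)/I_{\mathbb{X}}(d)$.

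Having established these two properties, the rank-nullity theorem applied to the surjective $K$-linear map $\psi_d$ yields
$$\dim_K (S/I_{\mathbb{X}})_d = \dim_K \ker \psi_d + \dim_K (R/I_{\mathbb{T}_{k-1}})_d,$$
which by the definitions $H_{\mathbb{X}}(d) = \dim_K (S_d/I_{\mathbb{X}}(d))$, $H_{\psi}(d) = \dim_K (I_{\mathbb{Y}}(d)/I_{\mathbb{X}}(d))$, and $H_{\mathbb{T}_{k-1}}(d) = \dim_K (R_d/I_{\mathbb{T}_{k-1}}(d))$ is exactly the claimed identity. Finally, recall $\dim_K C_{\mathbb{X}}(d) = H_{\mathbb{X}}(d)$ from the discussion after Definition \ref{param1}, closing the proof.

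The only subtlety I foresee is bookkeeping to confirm that the grading inherited by the kernel $\ker\psi$ inside $S/I_{\mathbb{X}}$ truly matches $I_{\mathbb{Y}}(d)/I_{\mathbb{X}}(d)$ rather than something coarser; once one verifies that $\theta$ is a degree-preserving map of graded rings (which is transparent from the definition $\theta(X_i) \in \{Y_1, Y_3, \ldots, Y_{2k-1}\}$) this is immediate, so there is no genuine obstacle and the proof reduces to one application of rank-nullity.
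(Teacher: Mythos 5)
Your proposal is correct and follows essentially the same route as the paper: restrict the lemma's epimorphism $\psi$ to its degree-$d$ component $\psi_d : S_d/I_{\mathbb{X}}(d)\to R_d/I_{\mathbb{T}_{k-1}}(d)$, verify surjectivity and that $\ker\psi_d=I_{\mathbb{Y}}(d)/I_{\mathbb{X}}(d)$, and conclude by rank--nullity. Your write-up is in fact slightly more careful than the paper's (you explicitly check that $\theta$ preserves degrees and that the grading on $\ker\psi$ matches), and your final identity $H_{\mathbb{X}}(d)=H_{\psi}(d)+H_{\mathbb{T}_{k-1}}(d)$ corrects a transposition in the paper's displayed equation, which mistakenly puts $H_{\psi}(d)$ where $H_{\mathbb{T}_{k-1}}(d)$ belongs.
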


\begin{proof}
Let $\psi_d$ be the following linear map
\begin{align*}
& \psi_d: S_d/I_{\mathbb{X}}(d) \rightarrow R_d/I_{\mathbb{T}_{k-1}}(d), \\
& \hspace{0.1cm} f+I_{\mathbb{X}}(d) \rightarrow \theta(f)+I_{\mathbb{T}_{k-1}}(d).
\end{align*}

$\psi_d$ is a surjective linear transformation and then $(S_d/I_{\mathbb{X}}(d))/\ker \psi_d$ is isomorphic, as a linear space, to $R_d/I_{\mathbb{T}_{k-1}}(d)$. But $\ker \psi_d=I_{\mathbb{Y}}(d)/I_{\mathbb{X}}(d)$, and thus
$$
H_{\mathbb{X}}(d)-\dim_K I_{\mathbb{Y}}(d)/I_{\mathbb{X}}(d)=H_\psi(d),
$$
and the result follows.
\end{proof}

Although in Section \ref{bounds} we gave some bounds for the regularity index of $S/I_{\mathbb{X}}$, where $\mathbb{X}$ is the toric set parameterized by the edges of a bipartite graph $\mathcal{G}$ with a perfect matching, in the following result we give a formula of this number in terms of the corresponding regularity index associated with the projective torus, which is given by (\cite[Lemma 1]{GRH})
\begin{equation} \label{regtorus}
{\rm{reg}} \, (R/I_{\mathbb{T}_{k-1}})=(q-2)(k-1),
\end{equation} 
and the regularity of $I_{\mathbb{Y}}/I_{\mathbb{X}}$.

\begin{corollary} \label{corollary}
The regularity index of the quotient ring $S/I_{\mathbb{X}}$ is given by
$$
{\rm{reg}} \,(S/I_{\mathbb{X}})=\max \{{\rm{reg}} \, (R/I_{\mathbb{T}_{k-1}}),{\rm{reg}} \, (I_{\mathbb{Y}}/I_{\mathbb{X}})\}.
$$
\end{corollary}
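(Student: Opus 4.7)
The plan is to combine Theorem \ref{TeoF} with the standard fact that the regularity index is the stabilization point of the Hilbert function, applied to each of the two summands in the decomposition $H_{\mathbb{X}}(d)=H_\psi(d)+H_{\mathbb{T}_{k-1}}(d)$.

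First I would set $a:={\rm{reg}}\,(R/I_{\mathbb{T}_{k-1}})$ and $b:={\rm{reg}}\,(I_{\mathbb{Y}}/I_{\mathbb{X}})$. For the easier inequality ${\rm{reg}}\,(S/I_{\mathbb{X}})\leq\max\{a,b\}$, note that for $d\geq a$ the torus Hilbert function has stabilized at its maximum value $(q-1)^{k-1}$, and for $d\geq b$ the Hilbert function $H_\psi$ has stabilized at its own maximum value (which, since $H_\psi(d)=H_{\mathbb{X}}(d)-H_{\mathbb{Y}}(d)$ and eventually $H_{\mathbb{X}}(d)=|\mathbb{X}|$, $H_{\mathbb{Y}}(d)=|\mathbb{T}_{k-1}|=(q-1)^{k-1}$, must equal $|\mathbb{X}|-(q-1)^{k-1}$). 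Adding these two at $d\geq\max\{a,b\}$ gives $H_{\mathbb{X}}(d)=|\mathbb{X}|$, which yields the inequality.

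For the reverse inequality, I would show that both summands are non-decreasing and bounded by their respective maxima; then the sum equals $|\mathbb{X}|$ only when \emph{both} summands simultaneously attain their maximum. Monotonicity of $H_{\mathbb{T}_{k-1}}$ is standard. For $H_\psi$, the key remark is that every point of $\mathbb{X}$ lies in the projective torus, so $X_1$ does not vanish on $\mathbb{X}$; hence $X_1$ is a non-zero divisor on $S/I_{\mathbb{X}}$, and consequently also on the submodule $I_{\mathbb{Y}}/I_{\mathbb{X}}$. Multiplication by $X_1$ therefore gives an injection $I_{\mathbb{Y}}(d)/I_{\mathbb{X}}(d)\hookrightarrow I_{\mathbb{Y}}(d+1)/I_{\mathbb{X}}(d+1)$, showing that $H_\psi$ is non-decreasing, so $H_\psi(d)\leq|\mathbb{X}|-(q-1)^{k-1}$ for every $d$.

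With both bounds in hand, the equality $H_{\mathbb{X}}(d)=H_\psi(d)+H_{\mathbb{T}_{k-1}}(d)=|\mathbb{X}|$ forces $H_\psi(d)=|\mathbb{X}|-(q-1)^{k-1}$ and $H_{\mathbb{T}_{k-1}}(d)=(q-1)^{k-1}$, i.e.\ $d\geq b$ and $d\geq a$. Therefore ${\rm{reg}}\,(S/I_{\mathbb{X}})\geq\max\{a,b\}$, finishing the proof. The only non-formal point is the monotonicity of $H_\psi$, but it reduces to the transparent observation that $X_1$ is a non-zero divisor modulo $I_{\mathbb{X}}$; once that is in place, the statement follows immediately from Theorem \ref{TeoF}.
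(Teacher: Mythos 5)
Your proposal is correct and follows essentially the same route as the paper: the decisive step in both is that multiplication by $X_1$ is injective on $I_{\mathbb{Y}}/I_{\mathbb{X}}$ (because every point of $\mathbb{X}$ has non-zero first coordinate), so $H_\psi$ is non-decreasing, and the formula then follows from the decomposition in Theorem \ref{TeoF}. You merely spell out the final bookkeeping (both summands must attain their maxima simultaneously) that the paper leaves implicit.
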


\begin{proof}
Let $\varphi$ the linear map
\begin{align*}
& \varphi: I_{\mathbb{Y}}(d)/I_{\mathbb{X}}(d) \rightarrow I_{\mathbb{Y}}(d+1)/I_{\mathbb{X}}(d+1), \\
& \hspace{0.8cm} f+I_{\mathbb{X}}(d) \rightarrow X_1f+I_{\mathbb{X}}(d+1).
\end{align*}

If $f+I_{\mathbb{X}}(d)=g+I_{\mathbb{X}}(d)$, with $f,g \in I_{\mathbb{Y}}(d)$, then
$$
X_1f-X_1g=X_1(f-g) \in I_{\mathbb{X}}(d+1),
$$
and thus $X_1f+I_{\mathbb{X}}(d+1)=X_1 g+I_{\mathbb{X}}(d+1)$.
It implies that $\varphi$ is a well--defined map. It suffices to show that this is an injective map because, in this case,  $H_{\psi}(d) \leq H_{\psi}(d+1)$ and $H_{\psi}$ is a non--decreasing function. Let $f +I_{\mathbb{X}}(d) \in \ker \varphi$. Then $X_1f \in I_{\mathbb{X}}(d+1)$. Let $$P=[t_1t_2,\ldots,t_1t_{2i_1},t_3t_4,\ldots,t_3t_{2i_3},\ldots,t_{2k-1}t_{2k},\ldots,t_{2k-1}t_{2i_{2k-1}}] \in \mathbb{X}.$$ Therefore, $t_1t_2f(P)=0$ for all $t_1,t_2 \in K^*$. That is, $f(P)=0$ for all $P \in \mathbb{X}$. Then $f \in I_{\mathbb{X}}(d)$ and it implies that
$$
\ker \varphi=I_{\mathbb{X}}(d).
$$

Thus $\varphi$ is an injective map, and the claim follows.

\end{proof}

\begin{remark} \label{remark}
From Corollary \ref{corollary} and Equation (\ref{regtorus}), we get that
$$
{\rm{reg}} \, (S/I_{\mathbb{X}}) \geq {\rm{reg}} \, (R/I_{\mathbb{T}_{k-1}})=(q-2)(k-1),
$$
and this is a tight lower bound because it is attained in the case of even cycles (\cite[Theorem 6.2]{vaz}), and when the graph $\mathcal{G}$ is a complete bipartite graph of the form $\mathcal{K}_{m,m}$ (\cite[Corollary 5.4]{sarabia1}). Both graphs are bipartite graphs with a perfect matching.
\end{remark}

\begin{corollary} \label{corollary2}
Let $\mathcal{G}$ be a graph such that each of its $m$ connected components is an even cycle $C_{2l_i}$ and let $\mathbb{X}$ be the toric set parameterized by its edges. Then
$$
{\rm{reg}} \, (S/I_{\mathbb{X}})=(q-2)(n-1).
$$

\begin{proof}
We notice that in this case $\sum_{i=1}^m \l_i=k$, where $n=s=2k$. Furthermore, by \cite[Theorem 6.3]{vaz}, we get that
$$
{\rm{reg}} \, (S/I_{\mathbb{X}}) \leq (q-2)(s-\sum_{i=1}^m l_i-1)=(q-2)(k-1).
$$

As $\mathcal{G}$ is a bipartite graph with a perfect matching, the claim follows by Remark \ref{remark}.
\end{proof}
\end{corollary}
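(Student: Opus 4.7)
The plan is to establish the equality by combining a known upper bound from the literature with the lower bound I obtained in Remark \ref{remark}. Since the graph $\mathcal{G}$ is assumed to be a disjoint union of even cycles $C_{2l_1},\ldots,C_{2l_m}$, it is certainly bipartite and admits a perfect matching (each even cycle does, and a disjoint union of perfect matchings is a perfect matching), so all the machinery developed in Sections \ref{bounds}--\ref{DimReg} applies. In particular, the total number of edges satisfies $s = \sum_{i=1}^{m} 2l_i = 2k$, and the perfect matching of $\mathcal{G}$ consists of exactly $\sum_{i=1}^{m} l_i = k$ edges.

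For the lower bound, I would invoke Remark \ref{remark} directly: in our bipartite-with-perfect-matching setting, ${\rm{reg}}\,(S/I_{\mathbb{X}}) \geq {\rm{reg}}\,(R/I_{\mathbb{T}_{k-1}}) = (q-2)(k-1)$, which follows from Corollary \ref{corollary} (the max formula) together with Equation (\ref{regtorus}) giving the regularity of the projective torus.

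For the upper bound, the key input is \cite[Theorem 6.3]{vaz}, which bounds the regularity of $S/I_{\mathbb{X}}$ for a bipartite graph whose edge set decomposes as a disjoint union of even-cycle subgraphs. Since in our situation the connected components of $\mathcal{G}$ themselves are the even cycles $C_{2l_i}$ (with edge sets obviously disjoint), that theorem applies with the optimal choice of even-cycle decomposition, yielding ${\rm{reg}}\,(S/I_{\mathbb{X}}) \leq (q-2)\bigl(s - \sum_{i=1}^{m} l_i - 1\bigr) = (q-2)(2k - k - 1) = (q-2)(k-1)$.

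Combining the two bounds gives the claimed equality. The only nontrivial bookkeeping step is verifying the identity $\sum l_i = k$ and confirming that the hypotheses of \cite[Theorem 6.3]{vaz} are satisfied with the natural cycle decomposition; both are immediate from the structural assumption that every component of $\mathcal{G}$ is itself an even cycle, so I do not anticipate any serious obstacle. Essentially the corollary is a convergence of the tight lower bound from Corollary \ref{corollary} and the known upper bound from \cite{vaz}.
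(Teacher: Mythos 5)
Your proposal is correct and follows essentially the same route as the paper's own proof: the lower bound $(q-2)(k-1)$ from Remark \ref{remark} (i.e.\ Corollary \ref{corollary} plus Equation (\ref{regtorus})) combined with the upper bound $(q-2)(s-\sum_{i=1}^m l_i-1)=(q-2)(k-1)$ from \cite[Theorem 6.3]{vaz}, using $\sum_{i=1}^m l_i=k$. One small observation: both your argument and the paper's actually establish ${\rm{reg}}\,(S/I_{\mathbb{X}})=(q-2)(k-1)$ with $n=2k$ (consistent with the example in Section \ref{example}, where $k=4$ and the regularity is $3q-6$), so the ``$(q-2)(n-1)$'' displayed in the statement of the corollary appears to be a typo for $(q-2)(k-1)$.
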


\section{Example} \label{example}

Let $\mathcal{G}$ be the graph with two connected components, each of them a square (a cycle with four vertices), and let $\mathbb{X}$ be the toric set parameterized by its edges.

\bigskip
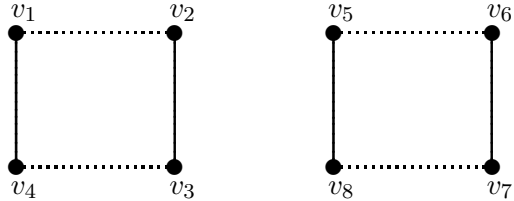
\begin{figure}[!ht]
\vspace{0.3cm}

\thicklines
\begin{picture}(800,50)
 \put(140,51){\circle*{6}}
 \put(200,51){\circle*{6}}
 \put(140,0){\circle*{6}}
 \put(200,0){\circle*{6}}
 
\put(138,57){$v_1$}
\put(198,57){$v_2$}
\put(198,-10){$v_3$}
\put(138,-10){$v_4$}

\curve(140,51,140,0)
\curve[20](140,51,200,51)
\curve(200,51,200,0)
\curve[20](140,0,200,0)

\vspace{0.3cm}
 \put(260,51){\circle*{6}}
 \put(320,51){\circle*{6}}
 \put(260,0){\circle*{6}}
 \put(320,0){\circle*{6}}
 
\put(258,57){$v_5$}
\put(318,57){$v_6$}
\put(318,-10){$v_7$}
\put(258,-10){$v_8$}

\curve(260,51,260,0)
\curve[20](260,51,320,51)
\curve(320,51,320,0)
\curve[20](260,0,320,0)

\end{picture}
\caption{The graph $\mathcal{G}$.}
\label{figure1}
 \end{figure}
 
A bipartition of the vertex set is given by $$\mathcal{U}=\{v_1,v_3,v_5,v_7\}, \,\,{\mbox{and}} \,\, \mathcal{W}=\{v_2,v_4,v_6,v_8\}.$$ 
 
 Also, a perfect matching is $\mathcal{M}=\{\{v_1,v_2\},\{v_3,v_4\},\{v_5,v_6\},\{v_7,v_8\}\}$, and its edges appear with dotted lines in Figure \ref{figure1}. The toric set parameterized by the edges of $\mathcal{G}$ is given by
 $$
 \mathbb{X}=\{[t_1t_2,t_1t_4,t_3t_4,t_3t_2,t_5t_6,t_5t_8,t_7t_8,t_7t_6] \in \mathbb{P}^7 : t_i \in K^*\},
 $$
 and the length of the code $C_{\mathbb{X}}(d)$ is $|\mathbb{X}|=(q-1)^{5}$. Moreover, by using Corollary \ref{corollary2},
 $$
 {\rm{reg}} \, (S/I_{\mathbb{X}})=3q-6, \,\,q>2.
 $$
 
 On the other hand, the set $\mathbb{Y}$, defined in Equation (\ref{Intersection}), is given by
 $$
 \mathbb{Y}=\{[t_1,t_1,t_3,t_3,t_5,t_5,t_7,t_7] \in \mathbb{P}^7 : t_i \in K^*\},
 $$ 
 and its vanishing ideal, according with Theorem \ref{theorem1}, is
 \begin{align*}
 I_{\mathbb{Y}}=(X_3^{q-1}-X_1^{q-1},X_5^{q-1}-X_1^{q-1},X_7^{q-1}- & X_1^{q-1}, X_2-X_1, \\
 & X_4-X_3,X_6-X_5,X_8-X_7).
 \end{align*}
 
 Furthermore, if we take $q=5$, then the Hilbert functions involved in Theo\-rem \ref{dimension} are described in Table \ref{tab1}. Of course, we are interested in the cases $1 \leq d<{\rm{reg}} \, (S/I_{\mathbb{X}})=9$.
 
 \begin{table}[h] 
\caption{The di\-ffe\-rent Hilbert functions involved in Theorem \ref{dimension} with $q=5$, and $\mathbb{X}$ being the toric set parameterized by the edges of the graph $\mathcal{G}$ of Figure \ref{figure1}.}
\label{tab1}
\begin{center}
\begin{tabular}{||c||c||c||c||c||c||c||c||c||} \hline
$d$ & 1 & 2 & 3 & 4 & 5 & 6 & 7 & 8 \\ \hline
$H_{\mathbb{T}_{k-1}}(d)$ & 4 & 10 & 20 & 32 & 44 & 54 & 60 & 63  \\ \hline
$H_{\psi}(d)$ & 4 & 24 & 84 & 208 & 396 & 616 & 796 & 912 \\ \hline
$H_{\mathbb{X}}(d)$ & 8 & 34 & 104 & 240 & 440 & 670 & 856 & 975  \\ \hline
\end{tabular}
\end{center}
\end{table}
 
Moreover, if we set $l_d=\left\lceil \frac{\delta_{\mathbb{T}_{n-1}}(2d)}{(q-1)^m}\right\rceil=\left\lceil   \frac{\delta_{\mathbb{T}_{7}}(2d)}{4^2} \right\rceil$, $u_d=(q-1)^{k-m} \cdot \delta_{\mathbb{T}_{k-1}}(d)=4^2 \cdot \delta_{\mathbb{T}_{3}}(d)$ (both bounds appear in Theorem \ref{theorem1}), and $B_d=(q-1)^{n-m-1}-H_{\mathbb{X}}(d)+1=4^5-H_{\mathbb{X}}(d)+1$ is the Singleton bound, then we get Table \ref{tab2}.

 \begin{table}[h]
\caption{Some bounds for the minimum distance of the code $C_{\mathbb{X}}(d)$ pa\-ra\-me\-te\-ri\-zed by the edges of the graph $\mathcal{G}$ of  Figure \ref{figure1}.}
\label{tab2}
\begin{center}
\begin{tabular}{||c||c||c||c||c||c||c||c||c||} \hline
$d$ & 1 & 2 & 3 & 4 & 5 & 6 & 7 & 8 \\ \hline
$l_d$ & 512 & 192 & 64 & 32 & 12 & 4 & 2 & 1   \\ \hline
$u_d$ & 768 & 512 & 256 & 192 & 128 & 64 & 48 & 32 \\ \hline
$B_d$ & 1017 & 991 & 921 & 785 & 585 & 355 & 169 & 50  \\ \hline
\end{tabular}
\end{center}
\end{table}

\end{document}